\documentclass[12pt,reqno]{amsart}

\usepackage{euscript}
\usepackage{amssymb}
\usepackage{amsthm}
\usepackage{amsmath}
\usepackage{a4wide}
\usepackage{latexsym}
\usepackage{mathrsfs}
\usepackage[OT1]{fontenc}
\usepackage[latin1]{inputenc}

\usepackage{verbatim}

\DeclareFontFamily{U}{mathx}{\hyphenchar\font45}
\DeclareFontShape{U}{mathx}{m}{n}{
      <5> <6> <7> <8> <9> <10>
      <10.95> <12> <14.4> <17.28> <20.74> <24.88>
      mathx10
      }{}
\DeclareSymbolFont{mathx}{U}{mathx}{m}{n}
\DeclareFontSubstitution{U}{mathx}{m}{n}
\DeclareMathAccent{\widecheck}{0}{mathx}{"71}
\DeclareMathAccent{\wideparen}{0}{mathx}{"75}



\usepackage{amssymb}
\usepackage{mathrsfs}

\newtheorem{theorem}{Theorem}[section]

\newtheorem{proposition}[theorem]{Proposition}
\newtheorem{corollary}[theorem]{Corollary}

\theoremstyle{definition}

\numberwithin{equation}{section}

\newcommand{\eten}{\,\widecheck{\otimes}\,}

\newcounter{smallromans}

\newenvironment{romanenumerate}
{\begin{list}{{\normalfont\textrm{(\roman{smallromans})}}}%
    {\usecounter{smallromans}\setlength{\itemindent}{0cm}%
      \setlength{\leftmargin}{5.5ex}\setlength{\labelwidth}{5.5ex}%
      \setlength{\topsep}{0.2ex}\setlength{\partopsep}{0ex}%
      \setlength{\itemsep}{0.2ex}}}%
  {\end{list}}

\newcounter{smallalphs}

  {\end{list}}

\renewcommand{\phi}{\ensuremath{\varphi}}
\renewcommand{\epsilon}{\ensuremath{\varepsilon}}

\begin{document}
\title[On C*-algebras which cannot be decomposed into tensor products]{On C*-algebras which cannot be decomposed\\ into tensor products with both factors infinite-dimensional}


\author[T.~Kania]{Tomasz Kania}
\address{Department of Mathematics and Statistics, Fylde College, Lancaster University, Lancaster LA1 4YF, United Kingdom.}
\email{tomasz.marcin.kania@gmail.com}
\maketitle
\begin{abstract}We prove that C*-algebras which, as Banach spaces, are Grothendieck cannot be decomposed into a tensor product of two infinite-dimensional C*-algebras. By a result of Pfitzner, this class contains all von Neumann algebras and their norm-quotients. We thus complement a recent result of Ghasemi who established a similar conclusion for the class of SAW*-algebras. \end{abstract}
\section{Introduction and statement of the main result}

During the London Mathematical Society Meeting held in Nottingham on $6^{{\rm th}}$ September 2010, Simon Wassermann asked a question of whether the Calkin algebra can be decomposed into a C*-tensor product of two infinite-dimensional C*-algebras. This question stems from the study of the elusive nature of the automorphism group of the Calkin algebra whose structure is independent of the usual axioms of Set Theory (\cite{farah,phillips}). Ghasemi (\cite{gha}) studied tensorial decompositions of SAW*-algebras answering the above-mentioned question in the negative---one cannot thus expect to build automorphisms of such algebras out of automorphisms of non-trivial tensorial factors. Let us remark that the commutative version of Ghasemi's result was known to experts as it follows directly from the conjunction of \cite[Theorem B]{seever} with the main theorem of \cite{cem}. \medskip

The aim of this note is to prove that C*-algebras which satisfy a certain Banach-space property cannot be decomposed into a tensor product of C*-algebras. More specifically, we prove that C*-algebras which, as Banach spaces, are Grothendieck (\emph{i.e.}, weak*-null sequences in the dual space converge weakly) do not allow such a tensorial decomposition. In particular, we give a new solution to the problem of Wassermann as the Calkin algebra falls into the class of Grothendieck spaces.

\begin{theorem}\label{main}Let $A$ be a C*-algebra which, as a Banach space, is a Grothendieck space. Suppose that $E$ and $F$ are C*-algebras such that $$A\cong E\otimes_\gamma F$$ for some C*-norm $\gamma$. Then either $E$ or $F$ (or both) are finite-dimensional. \smallskip

In other words, a C*-algebra, which is also a Grothendieck space, cannot be decomposed into a tensor product of two infinite-dimensional C*-algebras.\end{theorem}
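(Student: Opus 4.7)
My plan is to argue by contradiction: assuming both $E$ and $F$ are infinite-dimensional, I will construct a bounded sequence $(\Phi_n)$ in $A^*$ that is weak*-null but not weakly null, directly contradicting the Grothendieck property of $A = E \otimes_\gamma F$.

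The starting point is that any infinite-dimensional C*-algebra contains a sequence of pairwise orthogonal positive contractions of norm one, so I would pick such sequences $(e_n) \subseteq E$ and $(f_n) \subseteq F$. For each $n$ I would then select a state $\phi_n$ on $E$ with $\phi_n(e_n) = 1$, and use the identity $\|e_n + e_m\| = 1$ (which follows from orthogonality and positivity) together with the positivity of $\phi_n$ to deduce $\phi_n(e_m) = 0$ for $m \neq n$; biorthogonal states $\psi_n$ on $F$ are produced in the same way. Since $\sum_n \rho(e_n) \leq 1$ for every state $\rho$ on $E$, the sequence $(e_n)$ is weakly null in $E$, and similarly for $(f_n)$ in $F$.

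Using weak*-compactness of the dual unit balls, I would pass to a common subsequence and relabel so that $\phi_n \to \phi$ and $\psi_n \to \psi$ in the weak*-topology, for some states $\phi, \psi$. The candidate functionals are then
\[ \Phi_n \,:=\, (\phi_n - \phi) \otimes (\psi_n - \psi) \in A^*. \]
Each $\Phi_n$ is uniformly bounded on $A$, because a product of states extends to a state on the minimal C*-tensor product and hence to a norm-one functional on $A$ (using $\gamma \geq \min$), and $\Phi_n$ is a linear combination of four such products. On elementary tensors one has $\Phi_n(a \otimes b) = (\phi_n - \phi)(a)(\psi_n - \psi)(b) \to 0$; combined with uniform boundedness and the density of the algebraic tensor product, this yields that $(\Phi_n)$ is weak*-null.

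The remaining and main step is to exhibit an element $\xi \in A^{**}$ witnessing that $(\Phi_n)$ is \emph{not} weakly null. I would take $\xi$ to be the weak*-limit in $A^{**}$ of the increasing partial sums $\sum_{k=1}^N e_k \otimes f_k$; this limit exists because every $\eta \in A^*$ is a linear combination of states, and for a state $\rho$ on $A$ the series $\sum_k \rho(e_k \otimes f_k) \leq 1$. A direct computation using the biorthogonality relations then gives
\[ \xi(\Phi_n) \,=\, \bigl(1 - \phi(e_n)\bigr)\bigl(1 - \psi(f_n)\bigr) + \sum_{k \neq n} \phi(e_k)\psi(f_k), \]
which tends to $1 + \sum_k \phi(e_k)\psi(f_k) \geq 1$ as $n \to \infty$ by weak nullity of $(e_n)$ and $(f_n)$. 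Hence $\xi(\Phi_n) \not\to 0$, so $(\Phi_n)$ is not weakly null, contradicting the Grothendieck property. The points requiring most care are the biorthogonality of the states $\phi_n$, $\psi_n$ and the well-definedness of $\xi$ on an arbitrary C*-cross norm; both reduce to standard C*-algebraic facts, so I do not expect a serious obstacle beyond the setup.
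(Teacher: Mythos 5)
The decisive step of your argument fails: ``Using weak*-compactness of the dual unit balls, I would pass to a common subsequence and relabel so that $\phi_n \to \phi$ and $\psi_n \to \psi$ in the weak*-topology.'' Weak*-compactness of $B_{E^*}$ yields convergent \emph{subnets}, not subsequences; the dual ball is weak*-metrizable only when $E$ is separable, and here $E$ and $F$ are arbitrary infinite-dimensional C*-algebras (non-separable in the very situations the theorem is aimed at). Worse, for the particular sequences you build the desired subsequence can genuinely fail to exist. Note first that a state $\phi_m$ with $\phi_m(e_m)=1=\|\phi_m\|$ necessarily satisfies $\phi_m(x)=1$ for every positive contraction $x\ge e_m$ and $\phi_m(x)=0$ for every positive contraction $x$ with $x+e_m\le 1$. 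So if, say, $E$ is a von Neumann algebra and the orthogonal positive contractions $e_n$ admit bounded sums of subfamilies (e.g.\ $x=\sum_{k} e_{n_{2k}}$, a strong limit of an increasing sequence of positive contractions), then along any subsequence $(\phi_{n_k})$ one gets $\phi_{n_{2k}}(x)=1$ while $\phi_{n_{2k+1}}(x)=0$, so no subsequence of $(\phi_n)$ is weak*-convergent. Restricting to the separable subalgebra generated by the $e_n$ does not repair this, because you need $\Phi_n(z)\to 0$ for \emph{all} $z\in A$, and Hahn--Banach extensions of functionals controlled only on a subalgebra lose all weak* control on $A$.

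This is exactly the point where the paper's proof takes a different tool: it invokes the Josefson--Nissenzweig theorem to produce a weak*-null sequence of norm-one functionals $(x_n^*)$ on $F$, and then, rather than exhibiting a weak*-null non-weakly-null sequence in $A^*$ directly, it builds an operator $T\colon E\otimes_{\min}F\to c_0$ carrying the weakly unconditionally convergent series $\sum_n e_n\otimes x_n$ onto the canonical basis of $c_0$, and concludes via Pfitzner's theorem (property (V) for C*-algebras) and the R\"abiger/Diestel characterisations of Grothendieck spaces. The remainder of your construction --- the biorthogonal norming states, the boundedness of product functionals on $A$ via $\gamma\ge\min$, and the element $\xi\in A^{**}$ obtained as the weak* limit of the increasing partial sums $\sum_{k\le N}e_k\otimes f_k$, together with the computation of $\xi(\Phi_n)$ --- is sound (the weak* limits $\phi,\psi$ need not be states in the non-unital case, but only their positivity and norm bound are used), yet it all hinges on the unjustified, and in relevant cases false, sequential-compactness step, so as written the proof has a genuine gap.
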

\pagebreak
Let us list examples of classes of C*-algebras which meet the assumptions of Theorem~\ref{main}.
\begin{proposition}\label{classes}C*-algebras in each of the following classes are Grothendieck spaces:
\begin{romanenumerate}
\item von Neumann algebras (or more generally, AW*-algebras) and their norm-quotients; in particular $\mathscr{B}(H)$ and the Calkin algebra,
\item ultraproducts of C*-algebras over countably incomplete ultrafilters,
\item \label{sepinj}unital C*-algebras with the countable Riesz interpolation property,
\end{romanenumerate} \end{proposition}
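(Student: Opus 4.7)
The plan is to exhibit a common sufficient condition covering all three classes, namely \emph{monotone $\sigma$-completeness of the self-adjoint part}. Since Pfitzner's original theorem is stated for von Neumann algebras, one should invoke the strengthening (due to Hamana and Saito--Wright) that every monotone $\sigma$-complete C*-algebra is a Grothendieck space. The task then reduces to (a) verifying that each of the three listed classes consists of monotone $\sigma$-complete algebras (or algebras reducible to such), and (b) noting that the Grothendieck property is stable under norm-quotients by a routine duality argument.

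For (i), von Neumann algebras are handled by Pfitzner directly, and AW*-algebras are in fact monotone complete (hence $\sigma$-complete). The norm-quotient part uses that for a quotient map $q\colon A\twoheadrightarrow A/I$, the adjoint $q^{*}\colon (A/I)^{*}\to A^{*}$ is an isometric, weak*-to-weak* continuous embedding and $q^{**}$ is surjective; a weak*-null sequence $(f_n)$ in $(A/I)^{*}$ pulls back to a weak*-null---hence, by the Grothendieck property of $A$, weakly null---sequence $(q^{*}f_n)$ in $A^{*}$, and given any $\psi\in(A/I)^{**}$ one chooses $\Psi\in A^{**}$ with $q^{**}(\Psi)=\psi$ to conclude $\psi(f_n)=\Psi(q^{*}f_n)\to 0$. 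For (ii), an ultraproduct over a countably incomplete ultrafilter is countably saturated as a metric structure---a standard fact in the continuous model theory of C*-algebras---and this saturation forces every bounded increasing sequence of positive elements to admit a supremum in the ultraproduct, yielding monotone $\sigma$-completeness. For (iii), the unit furnishes a uniform upper bound for any norm-bounded sequence of positive elements, and the countable Riesz interpolation property enables an inductive construction of a least upper bound; monotone $\sigma$-completeness follows once more.

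The main technical obstacle lies in case (ii), where one must pass cleanly from countable saturation (a model-theoretic property) to monotone $\sigma$-completeness (an operator-algebraic one); although standard, this requires an explicit reference to the model theory of C*-algebras. The order-theoretic verification in (iii) is also somewhat delicate, but follows the familiar pattern from the theory of interpolation groups and Rickart-type algebras. The norm-quotient argument of (i), by contrast, is entirely routine and needs only elementary Banach-space duality.
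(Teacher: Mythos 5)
Your unifying strategy---reduce all three classes to monotone $\sigma$-completeness and then quote Sait\^o--Wright---does not work, because the classes in question are strictly larger than the class of monotone $\sigma$-complete algebras; indeed that is the whole point of the proposition. Concretely: (a) In (i), the assertion that AW*-algebras ``are in fact monotone complete'' is not a known theorem; whether every AW*-algebra is monotone complete is a long-standing open problem going back to Kaplansky. What is true, and what the paper uses, is that every \emph{maximal abelian} self-adjoint subalgebra of an AW*-algebra is monotone complete, hence Grothendieck (Sait\^o--Wright), and one then needs a separate reduction-to-masas argument (Proposition~\ref{abelian}, which rests on Pfitzner's theorem---which, incidentally, is proved for all C*-algebras, not just von Neumann algebras). (b) In (ii), countable saturation does \emph{not} force bounded increasing sequences to have suprema: saturation produces \emph{upper} bounds for countable bounded increasing families (this is essentially the SAW*/sub-Stonean separation property), but says nothing about \emph{least} upper bounds, which are not described by a countable type. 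Ultrapowers, coronas and the Calkin algebra are countably (degree-1) saturated yet not monotone $\sigma$-complete; in the abelian case the spectrum of such an algebra is an F-space but generally not basically disconnected ($\ell_\infty/c_0=C(\omega^*)$ is the standard example, reflecting the failure of countable completeness of $\mathcal{P}(\omega)/\mathrm{fin}$). So your step ``saturation $\Rightarrow$ monotone $\sigma$-completeness'' is simply false, and the conclusion for ultraproducts does not follow from it. (c) In (iii), the same problem recurs: countable Riesz interpolation lets you squeeze an element between two countable families, but an inductive squeeze produces \emph{some} interpolant, not a least upper bound; $\ell_\infty/c_0$ has the countable Riesz interpolation property (Seever) while failing monotone $\sigma$-completeness, so interpolation cannot imply it.

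The only part of your argument that is sound is the norm-quotient step in (i): the duality argument (pull a weak*-null sequence back along $q^*$ and lift functionals of the bidual along the surjection $q^{**}$) is correct and is exactly the standard Hahn--Banach remark the paper relies on. For the rest, the paper proceeds quite differently: von Neumann algebras and AW*-algebras are handled via Pfitzner's theorem together with the masa reduction of Proposition~\ref{abelian}; ultraproducts are handled by combining the Avil\'es \emph{et al.} result that ultraproducts over countably incomplete ultrafilters contain no complemented copy of $c_0$ with property (V) (Pfitzner) and R\"abiger's characterisation (Proposition~\ref{grchar}); and the interpolation case is obtained by applying the Polyrakis--Xanthos theorem to the ordered real Banach space $A_{\rm sa}$ and passing to $A=A_{\rm sa}\oplus iA_{\rm sa}$. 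You would need these (or comparable) inputs; monotone $\sigma$-completeness cannot serve as the common denominator.
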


\begin{proof}By Corollary~\ref{vna}, von Neumann algebras and hence their continuous, linear images (such as the Calkin algebra) satisfy the hypothesis of Theorem~\ref{main}. The assertion for AW*-algebras follows from Proposition~\ref{abelian} as every maximal abelian self-adjoint subalgebra of an AW*-algebra is Grothendieck by the main results of \cite{sm1} and \cite{sm}.\medskip

Avil\'{e}s \emph{et al.}~(\cite[Proposition~3.3]{av}) proved that ultraproducts of Banach spaces over countably incomplete ultrafilters cannot contain complemented copies of $c_0$. This, combined with Theorem~\ref{pfi} and Proposition~\ref{grchar}\eqref{g4}, yields that ultraproducts of C*-algebras are Grothendieck spaces.\medskip

The assertion \eqref{sepinj} follows from applying \cite[Theorem 9]{pol} to the real Banach space $A_{{\rm sa}}$ of all self-adjoint elements of a unital C*-algebra $A$ with the countable Riesz interpolation property and noticing that the Grothendieck property passes from $A_{{\rm sa}}$ to the complex Banach space $A = A_{{\rm sa}}\oplus iA_{{\rm sa}}$.
\end{proof} 

It is perhaps worthwhile to mention that even at the abelian level there exist many Grothendieck $C(X)$-spaces that are not SAW* (\emph{i.e.}, for which $X$ is not sub-Stonean); an example of such is a space constructed by Haydon (\cite{haydon}).\medskip

Each unital C*-algebra with the countable Riesz interpolation property is an SAW*-algebra in the sense of Pedersen (\cite[Proposition 2.7]{smith}); see \cite{ped} for the definition of an SAW*-algebra. Conjecturally all unital SAW*-algebras have the countable Riesz interpolation property (\cite[p.~117]{smith}), hence our result covers all known examples of SAW*-algebras---thus, we extend the main result of \cite{gha} to the class of ultraproducts of C*-algebras and other C*-algebras created, for instance, out of Grothendieck abelian C*-algebras that are not SAW*.\medskip 

To the best of our knowledge, it is not known whether a maximal abelian self-adjoint subalgebra of an SAW*-algebra is SAW* too. If this were the case, Proposition~\ref{abelian} would immediately imply that SAW*-algebras are Grothendieck spaces, because abelian SAW*-algebras are of the form $C_0(X)$ for some locally compact sub-Stonean space $X$, hence Grothendieck by \cite{ando} or \cite{seever}.

\section{Preliminaries}

\subsection*{Grothendieck spaces}

A series $\sum_{n=1}^\infty y_n$ in a Banach space $E$ is \emph{weakly unconditionally convergent} if the scalar series $\sum_{n=1}^\infty |\langle f, y_n\rangle|$ converges for each $f\in E^*$. An operator between Banach spaces is \emph{unconditionally converging} if it maps weakly unconditionally convergent series to unconditionally convergent series. A Banach space $E$ has \emph{property (V)} if for each Banach space $F$ the class of unconditionally converging operators $T\colon E\to F$ coincides with the class of weakly compact operators. It is a result of Pe\l czy\'{n}ski that $C(X)$-spaces (abelian C*-algebras) have property (V) (\cite{pel}).\medskip

A Banach space $E$ is \emph{Grothendieck} if every weak*-null sequence in $E^*$ converges weakly. The name \emph{Grothendieck space} stems from a result of Grothendieck (\cite{gr}) who identified $\ell_\infty$ as a space having this property. It is a trivial remark that reflexive spaces have this property too. By the Hahn--Banach theorem, the class of Grothendieck spaces is closed under surjective linear images, \emph{i.e.}, whenever $E$ and $F$ are Banach spaces, $T\colon E\to F$ is a surjective bounded linear operator, then if $E$ is Grothendieck, so is $F$. \medskip

Let us record a proposition which links property (V) with the Grothendieck property.

\begin{proposition}\label{grchar}Let $X$ be a Banach space. Then the following are equivalent.
\begin{romanenumerate}
\item \label{g1}$X$ is a Grothendieck space,
\item \label{g3}each bounded linear operator $T\colon X\to c_0$ is weakly compact.
\item \label{g4}$X$ has property (V) and no subspace of $X$ isomorphic to $c_0$ is complemented.
\end{romanenumerate}
\begin{proof}For the proof of equivalences \eqref{g1} $\iff$ \eqref{g3} see \cite[Corollary 5 on p.~150]{diestel2}. Equivalence \eqref{g1} $ \iff $ \eqref{g4} is due to R\"{a}biger (\cite{rab}); see also \cite[Theorem 28]{ghelew} (this argument is also implicit in the proof of \cite[Corollary 2]{cem}).\end{proof}

\end{proposition}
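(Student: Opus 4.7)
The plan is to establish the two equivalences separately, reducing everything to classical Banach-space tools: Bessaga--Pe\l czy\'{n}ski, Sobczyk, Gantmacher, and Pe\l czy\'{n}ski's fixing theorem for property (V).

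For (i) $\iff$ (ii), the bridge is the operator $T_{(f_n)}\colon X \to c_0$ attached to a bounded weak*-null sequence $(f_n) \subset X^*$ via $x \mapsto (f_n(x))_n$; every bounded $T\colon X \to c_0$ arises in this way with $f_n = T^*(e_n)$, and the sequence $(T^*(e_n))$ is automatically weak*-null because $(Tx)_n \to 0$ for each $x \in X$. A standard duality argument using $B_{\ell_1} = \overline{\mathrm{co}}\{\pm e_n\}$, Gantmacher's theorem and Krein's theorem on closed convex hulls shows that $T$ is weakly compact if and only if $\{T^*(e_n)\}$ is relatively weakly compact in $X^*$; combined with weak*-nullity, this is equivalent to $T^*(e_n) \to 0$ weakly. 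Hence (i) and (ii) are equivalent.

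For (i) $\iff$ (iii), one half of (i) $\Rightarrow$ (iii) is immediate: $c_0$ itself fails to be Grothendieck (the biorthogonal sequence in $\ell_1 = c_0^*$ is weak*-null but not weakly null), and the Grothendieck property passes to continuous linear images, hence to complemented subspaces---so $X$ can contain no complemented copy of $c_0$. The substantial half, that Grothendieck implies property (V), I would cite from R\"{a}biger. For (iii) $\Rightarrow$ (i), I would verify (ii): given a bounded $T\colon X \to c_0$ that fails to be weakly compact, property (V) together with Pe\l czy\'{n}ski's theorem---non-weakly-compact operators out of spaces with (V) fix a copy of $c_0$---produces a subspace $Z \subseteq X$ with $Z \cong c_0$ on which $T|_Z$ is an isomorphism. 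Then $T(Z)$ is a copy of $c_0$ inside $c_0$, hence complemented in $c_0$ by Sobczyk's theorem; pulling this projection back along $(T|_Z)^{-1}$ and composing with $T$ yields a bounded projection of $X$ onto $Z$, contradicting the assumed absence of complemented copies of $c_0$.

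The principal obstacle is the implication Grothendieck $\Rightarrow$ property (V): I do not see how to obtain the nontrivial half of R\"{a}biger's theorem without essentially reproducing his original argument, and I would simply invoke \cite{rab} for it. Everything else above is a routine assembly of classical facts.
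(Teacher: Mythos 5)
Your proposal is correct, and it is genuinely more self-contained than the paper's treatment: the paper proves nothing here, simply citing Diestel for (i)$\iff$(ii) and R\"abiger (and Ghenciu--Lewis) for (i)$\iff$(iii), whereas you reconstruct almost all of it from classical tools. Your (i)$\iff$(ii) argument via $f_n=T^*(e_n)$, Gantmacher, Krein and the observation that a weak*-null, relatively weakly compact sequence must be weakly null is exactly the standard proof behind the Diestel citation (one tiny point worth making explicit in the (ii)$\Rightarrow$(i) direction: a weak*-null sequence is bounded by Banach--Steinhaus, so the associated operator into $c_0$ is indeed bounded). Your (iii)$\Rightarrow$(i) argument --- property (V) plus the Bessaga--Pe\l czy\'{n}ski fixing theorem produce a copy $Z\cong c_0$ on which $T$ is an isomorphism, and Sobczyk then lets you pull back a projection of $c_0$ onto $T(Z)$ to a projection of $X$ onto $Z$ --- is exactly the argument implicit in Cembranos and in R\"abiger's easy direction, and the deduction that a Grothendieck space has no complemented $c_0$ is the routine part. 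The one step you leave to citation, Grothendieck $\Rightarrow$ property (V), is indeed the only genuinely deep ingredient, and the paper handles it in precisely the same way (by invoking R\"abiger); your decomposition in fact makes visible that this implication is never needed elsewhere in the paper, since Corollary~\ref{gengroth} and the main theorem only use ``(V) $+$ no complemented $c_0$ $\Rightarrow$ Grothendieck'' together with the elementary directions you prove, with property (V) for C*-algebras supplied by Pfitzner's theorem rather than by the Grothendieck assumption. So your route buys an essentially elementary, checkable proof of everything the paper actually uses, at the cost of still outsourcing the hardest (and here dispensable) implication.
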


We require the following theorem of Pfitzner (\cite[Theorem 1]{pfi}), which can be thought as a non-commutative generalisation of the above-mentioned result of Pe\l czy\'{n}ski.

\begin{theorem}[Pfitzner]\label{pfi}Let $A$ be a C*-algebra and let $\EuScript{K}\subset A^*$ be a bounded set. Then $\EuScript{K}$ is not relatively weakly compact if and only if there are a sequence $(x_n)_{n=1}^\infty$ of pairwise orthogonal, norm-one self-adjoint elements in $A$ and $\delta>0$ such that
$$\sup_{f\in \EuScript{K}} | \langle f, x_n \rangle | >\delta.$$
In particular, C*-algebras have property (V). \end{theorem}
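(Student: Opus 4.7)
The easy implication is that the existence of such a sequence precludes relative weak compactness. My plan is to observe that pairwise orthogonal self-adjoint norm-one elements $(x_n)$ in a C*-algebra are isometrically equivalent to the canonical basis of $c_0$: they sit inside a commutative C*-subalgebra where Gelfand theory gives $\|\sum_{n=1}^{N} a_n x_n\| = \max_{n\leq N} |a_n|$. Consequently, the operator $T\colon c_0 \to A$ defined by $Te_n = x_n$ is an isomorphism onto its image, and its adjoint $T^*\colon A^* \to \ell_1$ sends each $f_n \in \EuScript{K}$ satisfying $|f_n(x_n)| > \delta$ to a vector $g_n \in \ell_1$ whose $n$-th coordinate exceeds $\delta$. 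If $\EuScript{K}$ were relatively weakly compact, so would be $(g_n)$ in $\ell_1$; Schur's theorem would then provide a norm-convergent subsequence, directly contradicting the diagonal lower bound (after thinning once, if necessary, to arrange $g_m(n) \to 0$ for $m \ne n$).

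For the reverse (hard) direction, I would proceed as follows. Assume $\EuScript{K}$ is not relatively weakly compact. Akemann's non-commutative analogue of Grothendieck's characterisation gives a sequence of pairwise orthogonal projections $(p_n)$ in the enveloping von Neumann algebra $A^{**}$ and $\delta_0 > 0$ such that $\sup_{f \in \EuScript{K}} |f(p_n)| > \delta_0$ for every $n$. The task then becomes to transfer this testing from $A^{**}$-projections to genuine norm-one self-adjoint elements of $A$, without destroying pairwise orthogonality.

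The transfer is the technical heart of the argument and would be carried out by an iterative construction. At the $k$-th step, with pairwise orthogonal norm-one self-adjoint $x_1, \ldots, x_k \in A$ already chosen, I would form the hereditary C*-subalgebra $B_k = (1 - c_k) A (1 - c_k)$, where $c_k = \sum_{j \leq k} x_j^2$ (working in the unitisation, or with an approximate unit). A non-commutative Rosenthal-type lemma is then applied to the sequence $(p_n)$ to show that, after passing to a subsequence, the functionals in $\EuScript{K}$ still witness non-weak-compactness when compressed to $B_k$. A spectral-projection and functional-calculus step then produces a norm-one self-adjoint $x_{k+1} \in B_k$ and $f_{k+1} \in \EuScript{K}$ with $|f_{k+1}(x_{k+1})| > \delta$ for a slightly diminished $\delta < \delta_0$; orthogonality with $x_1, \ldots, x_k$ is automatic from $x_{k+1} \in B_k$. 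The main obstacle, and the reason a genuinely new idea is required beyond the abelian Pe\l czy\'nski argument, is the failure of exact cutting-down in the non-commutative setting: compression by $1 - c_k$ can substantially perturb a self-adjoint element even when projectionally the perturbation seems small, so the quantitative bookkeeping must be handled with considerable care.

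Finally, to deduce property~(V), suppose $T\colon A \to F$ is unconditionally converging but not weakly compact. Then $T^*(B_{F^*}) \subset A^*$ fails to be relatively weakly compact, so the main equivalence yields pairwise orthogonal norm-one self-adjoint $(x_n) \subset A$, functionals $g_n \in B_{F^*}$, and $\delta > 0$ with $\delta < |\langle g_n, Tx_n\rangle| \leq \|Tx_n\|$. However, by the first paragraph $(x_n)$ is $c_0$-equivalent, so $\sum_n x_n$ is weakly unconditionally convergent; since $T$ is unconditionally converging, $\sum_n Tx_n$ converges unconditionally in $F$, whence $\|Tx_n\| \to 0$, a contradiction.
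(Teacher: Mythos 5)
You should note first that the paper does not prove this statement at all: it is quoted verbatim from Pfitzner \cite[Theorem~1]{pfi}, with the paper explicitly remarking that the original proof is highly nontrivial and that a shorter proof was later given by Fern\'andez-Polo and Peralta \cite{pp}. So you were attempting to reprove a deep theorem, and parts of your proposal do work: the ``if'' direction (pairwise orthogonal norm-one self-adjoint elements span an isometric copy of $c_0$, push $\EuScript{K}$ into $\ell_1$ by the adjoint of the embedding, and use Schur's property) is correct, as is the deduction of property (V) from the main equivalence via Gantmacher's theorem and the weak unconditional convergence of $\sum_n x_n$. The reduction of the hard direction, via Akemann's characterisation, to pairwise orthogonal projections $(p_n)$ in $A^{**}$ is also a legitimate and standard starting point.

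The hard direction itself, however, is not proved, and one step of your outline is actually false as stated. You claim that orthogonality of $x_{k+1}$ with $x_1,\dots,x_k$ is ``automatic'' from $x_{k+1}\in B_k=(1-c_k)A(1-c_k)$ with $c_k=\sum_{j\le k}x_j^2$; but since the $x_j$ are merely self-adjoint norm-one elements, $x_j^2$ need not be a projection, $1-c_k$ does not annihilate $x_j$, and elements of $B_k$ need not be orthogonal to $x_j$ (already in $C_0(X)$: take $x_1$ a positive norm-one function which is not an indicator, so that $1-x_1^2$ is nonzero on part of the support of $x_1$). Exact cutting-down is available only for spectral projections, which live in $A^{**}$ rather than $A$, and returning from $A^{**}$ to genuine pairwise orthogonal elements of $A$ without losing the quantitative lower bound is precisely the content of Pfitzner's theorem. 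Your remaining steps --- the unspecified ``non-commutative Rosenthal-type lemma'' ensuring that $\EuScript{K}$ still witnesses non-weak-compactness after compression, and the ``spectral-projection and functional-calculus step'' producing $x_{k+1}$ with $|f_{k+1}(x_{k+1})|>\delta$ --- are exactly the technical heart you acknowledge is missing, so the proposal is a plausible strategy sketch (broadly in the spirit of \cite{pp}) rather than a proof; the paper itself simply uses the theorem as a black box with references to \cite{pfi} and \cite{pp}.
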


The original proof was highly sophisticated and relied on numerous deep facts from Banach space theory. Fortunately, Fern\'{a}ndez-Polo and Peralta (\cite{pp}) supplied a short and elementary proof of Pfitzner's theorem. \medskip

By virtue of Proposition~\ref{grchar}\eqref{g4}, we arrive at the following corollary.
\begin{corollary}\label{gengroth}A C*-algebra is a Grothendieck space if and only if it does not contain complemented subspaces isomorphic to $c_0$. \end{corollary}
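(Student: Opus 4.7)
The plan is to note that this corollary follows almost immediately by feeding Pfitzner's theorem into the Grothendieck characterisation of Proposition~\ref{grchar}\eqref{g4}; there is essentially no combinatorial content beyond chaining the two results. I would therefore not set up any new machinery and would simply observe that the characterisation of Proposition~\ref{grchar}\eqref{g4} has two clauses, one of which becomes vacuous in the C*-setting.

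More concretely, I would argue as follows. Let $A$ be a C*-algebra. By the last sentence of Theorem~\ref{pfi}, $A$ automatically enjoys property (V). Hence, when specialising the equivalence \eqref{g1}~$\iff$~\eqref{g4} of Proposition~\ref{grchar} to $X = A$, the hypothesis \textquotedblleft property (V)\textquotedblright{} in \eqref{g4} is free of charge, and the full content of \eqref{g4} reduces to the single condition that no subspace of $A$ isomorphic to $c_0$ is complemented. The equivalence \eqref{g1}~$\iff$~\eqref{g4} then reads exactly as the statement of the corollary.

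For the forward implication, if $A$ is Grothendieck, then \eqref{g1}~$\Rightarrow$~\eqref{g4} yields (in particular) that $c_0$ does not sit complementably in $A$. For the reverse implication, if no subspace of $A$ isomorphic to $c_0$ is complemented, then combining this with property~(V) supplied by Theorem~\ref{pfi} gives condition~\eqref{g4}, whence $A$ is Grothendieck by \eqref{g4}~$\Rightarrow$~\eqref{g1}.

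Since everything is an invocation of previously stated results, there is no genuine obstacle; the only point requiring care is to make clear that the role of Pfitzner's theorem here is to reduce the two-pronged criterion of \eqref{g4} to the single absence-of-complemented-$c_0$ condition. One could alternatively sidestep property~(V) altogether and use equivalence~\eqref{g1}~$\iff$~\eqref{g3} together with the fact that a bounded operator $T\colon A\to c_0$ fails to be weakly compact precisely when it fixes a copy of $c_0$ whose image is complemented (via a Bessaga\textendash Pe\l czy\'nski-type argument), but the route through property~(V) is cleaner given what has already been set up in the preliminaries.
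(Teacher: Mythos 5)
Your argument is correct and is exactly the route the paper intends: the corollary is stated as an immediate consequence of Theorem~\ref{pfi} (which gives property (V) for every C*-algebra) combined with the equivalence \eqref{g1}~$\iff$~\eqref{g4} of Proposition~\ref{grchar}. Nothing further is needed, and your optional alternative via \eqref{g3} is merely a side remark, not required.
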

The special case of Corollary~\ref{gengroth} where the C*-algebra is also a von Neumann algebra was noted by Pfitzner (\cite[Corollary 7]{pfi}):
\begin{corollary}\label{vna}Von Neumann algebras are Grothendieck spaces.\end{corollary}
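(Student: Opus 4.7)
The plan is to invoke Corollary~\ref{gengroth}, which reduces the task to showing that a von Neumann algebra $M$ admits no complemented subspace isomorphic to $c_0$. The essential extra ingredient is Sakai's theorem, namely that every von Neumann algebra is isometrically isomorphic to a dual Banach space $M\cong (M_*)^*$. Dualising the canonical embedding $M_*\hookrightarrow M_*^{**}=M^*$ then yields a norm-one projection $\pi\colon M^{**}\to M$ that retracts the canonical embedding $\iota_M\colon M\hookrightarrow M^{**}$; equivalently, $M$ is $1$-complemented in its bidual.

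Assume for contradiction that $j\colon c_0\hookrightarrow M$ is an isomorphic embedding and $P\colon M\to c_0$ is a bounded projection satisfying $P\circ j=\mathrm{id}_{c_0}$. Passing to biduals provides a bounded isomorphic embedding $j^{**}\colon\ell_\infty=c_0^{**}\to M^{**}$, and I would then consider the composition
\[
Q:=P\circ\pi\circ j^{**}\colon\ell_\infty\longrightarrow c_0.
\]
A short diagram chase, using the identities $\pi\circ\iota_M=\mathrm{id}_M$ and $j^{**}|_{c_0}=\iota_M\circ j$ (the latter being the naturality of the bidual construction applied to the canonical copy of $c_0$ inside $\ell_\infty$), shows that $Q$ restricts to $\mathrm{id}_{c_0}$ on $c_0\subseteq\ell_\infty$. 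Hence $Q$ is a bounded projection of $\ell_\infty$ onto $c_0$, contradicting the classical theorem of Phillips. By Corollary~\ref{gengroth}, $M$ is a Grothendieck space.

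I anticipate no substantive obstacle: the proof amounts to combining Corollary~\ref{gengroth} with two standard external inputs, Sakai's duality for von Neumann algebras and Phillips's theorem on the noncomplementability of $c_0$ in $\ell_\infty$. The only step deserving a line of verification is the compatibility of the canonical embeddings under bidualisation, which is routine.
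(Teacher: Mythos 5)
Your proposal is correct and follows the same route the paper intends: Corollary~\ref{gengroth} reduces the statement to the absence of complemented copies of $c_0$ in a von Neumann algebra, which you settle by the standard duality argument (Sakai's theorem makes $M$ $1$-complemented in $M^{**}$, so a complemented copy of $c_0$ in $M$ would yield a projection of $\ell_\infty$ onto $c_0$, contradicting Phillips's theorem). The paper simply delegates this last step to Pfitzner's \cite[Corollary 7]{pfi}, so your argument just fills in the details of the cited fact; the diagram chase with $\pi\circ\iota_M=\mathrm{id}_M$ and $j^{**}\circ\iota_{c_0}=\iota_M\circ j$ is sound.
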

Let us take this opportunity to record the following easy corollary to Theorem~\ref{pfi}.
\begin{proposition}\label{abelian}Let $A$ be a C*-algebra with the property that each maximal abelian self-adjoint subalgebra $B$ of $A$ is a Grothendieck space. Then $A$ is a Grothendieck space.  \end{proposition}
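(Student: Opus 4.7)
The plan is to argue by contrapositive: I will show that if $A$ is not a Grothendieck space, then neither is some maximal abelian self-adjoint subalgebra of $A$. By Proposition~\ref{grchar}, failure of the Grothendieck property for $A$ furnishes a bounded operator $T\colon A\to c_0$ that is not weakly compact; equivalently, by Gantmacher's theorem, the bounded set $\EuScript{K}:=T^*(B_{c_0^*})\subset A^*$ is not relatively weakly compact.

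I would then feed $\EuScript{K}$ into Pfitzner's theorem (Theorem~\ref{pfi}) to extract pairwise orthogonal, norm-one self-adjoint elements $(x_n)_{n=1}^\infty$ of $A$ together with $\delta>0$ such that $\|Tx_n\|_{c_0}=\sup_{f\in\EuScript{K}}|\langle f,x_n\rangle|>\delta$. The crucial structural point is that pairwise orthogonal self-adjoint elements commute (since $x_mx_n=0$ forces $x_nx_m=(x_mx_n)^*=0$), so the C*-subalgebra $C^*(x_n:n\in\N)$ of $A$ is abelian. By Zorn's lemma, it lies inside some MASA $B$ of $A$.

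It then remains to transfer the failure of weak compactness from $A$ down to $B$. Since $(x_n)\subset B$ is still pairwise orthogonal, norm-one, and self-adjoint, and $\|(T|_B)(x_n)\|=\|Tx_n\|>\delta$, the easy direction of Pfitzner's theorem applied inside $B$ certifies that $(T|_B)^*(B_{c_0^*})$ is not relatively weakly compact in $B^*$; equivalently, $T|_B\colon B\to c_0$ is not weakly compact. Proposition~\ref{grchar}\romanref{g3} then forces $B$ not to be Grothendieck, contradicting the hypothesis. I do not foresee any serious obstacle: the proof is essentially bookkeeping once Pfitzner's theorem is available, with the only minor verification being that an abelian C*-subalgebra always extends to a MASA by a standard Zorn argument.
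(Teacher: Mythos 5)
Your proposal is correct and follows essentially the same route as the paper: reduce via Proposition~\ref{grchar}\romanref{g3} and Gantmacher to a non-weakly-compact $T\colon A\to c_0$, apply Pfitzner's theorem to extract the orthogonal self-adjoint sequence $(x_n)$, place it inside a maximal abelian subalgebra $B$, and observe that $T|_B$ is still not weakly compact, contradicting the hypothesis on $B$. The only cosmetic difference is that you invoke the easy direction of Theorem~\ref{pfi} inside $B$ to certify non-weak-compactness of $T|_B$, which is exactly what the paper's appeal to its displayed estimate amounts to.
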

\begin{proof}Let $T\colon A\to c_0$ be a bounded linear operator. By Proposition~\ref{grchar}\eqref{g3} it is enough to show that $T$ is weakly compact. 

Assume contrapositively that $T$ is not weakly compact. By Gantmacher's theorem, $T$ is weakly compact if and only if $T^*$ is, so the set $\EuScript{K} = T^*(B)$ is not relatively weakly compact, where $B$ is the unit ball of $c_0^*$. By Theorem~\ref{pfi}, there exist $\delta>0$ and a sequence $(x_n)_{n=1}^\infty$ of pairwise orthogonal, norm-one self adjoint elements in $A$ such that 
\begin{equation}\label{wc}\sup_{f\in \EuScript{K}} | \langle f, x_n \rangle | = \sup_{y\in B} | \langle T^*y, x_n \rangle |  = \sup_{y\in B} | \langle y, Tx_n \rangle | >\delta.\end{equation}
Let $B_0\subseteq A$ be the C*-algebra generated by $\{x_n\colon n\in \mathbb{N}\}$. Since the $x_n$ ($n\in \mathbb{N}$) are pairwise orthogonal, $B_0$ is abelian. Let $B$ be a maximal abelian subalgebra of $A$ containing $B_0$. Consider the restriction $T|_B\colon B\to c_0$. It is not weakly compact by \eqref{wc}, so $B$ is not a Grothendieck space. \end{proof}

\section{Proof of Theorem~\ref{main}}
We are now in a position to prove our main result. The general strategy of the proof was inspired by the path taken by Cembranos in \cite{cem}.
\begin{proof}[Proof of Theorem~\ref{main}]Let $A$ be a C*-algebra and suppose that it is a Grothendieck space. Assume towards a contradiction that $A\cong E\otimes_\gamma F$ for some infinite-dimensional C*-algebras $E,F$ and a C*-norm $\gamma$.\medskip

Since *-homomorphisms between C*-algebras have closed range, there is a surjective *-homomorphism $Q\colon E\otimes_\gamma F\to E \otimes_{{\rm min}} F$ that extends the identity map on $E\odot F$. (Here $E \otimes_{{\rm min}} F$ denotes the minimal C*-tensor product of $E$ and $F$.)\medskip

Let $B_1\subset E$ and $B_2\subset F$ be infinite-dimensional abelian C*-algebras. (Such subalgebras exist because infinite-dimensional C*-algebras contain self-adjoint elements with infinite spectrum (\cite[Ex.~4.6.12]{kari}), hence the assertion follows from the spectral theorem.) We may thus identify $B_1 \otimes_{\rm min} B_2$ with a subalgebra of $E\otimes_{\rm min} F$ (\emph{cf}. \cite[II.9.6.2]{black}). However, the (minimal) tensor product of abelian C*-algebras is the same as the Banach-space injective tensor product, \emph{i.e.}, $$B_1 \otimes_{\rm min} B_2 = B_1 \eten B_2.$$

Let $(e_n)_{n=1}^\infty$ be a sequence of pairwise orthogonal, positive, norm-one elements in $B_1$. In particular, $E_0 = \overline{{\rm span}}\{e_n\colon n\in \mathbb{N}\}$ is isometric to $c_0$ and $(e_n)_{n=1}^\infty$ is equivalent to the canonical basis for $c_0$. Choose norm-one functionals $e_n^* \in E^*$ such that $\langle e_n^*, e_m\rangle = \delta_{n,m}$ ($n,m\in \mathbb{N}$). Moreover, let $(x^*_n)_{n=1}^\infty$ be a sequence of unit vectors in $F^*$ which converges to 0 in the weak* topology. (Such a sequence exists by the Josefson--Nissenzweig theorem (see \cite[Chapter XII]{diestel}.) Let $(x_n)_{n=1}^\infty\subset F$ be a sequence such that $\langle x_n, x_n^*\rangle =1$. Without loss of generality we may suppose that $\|x_n\|\leqslant 2$ for all $n$.\medskip

Define a map $T\colon E\odot_{\rm min} F \to \ell_\infty$ by the formula
\[ T\xi = ( \langle e_n^* \otimes x_n^*, \xi\rangle )_{n=1}^\infty\qquad (\xi \in E\odot_{\rm min} F).\]
This is a well-defined bounded linear operator because $( e_n^* \otimes x_n^*)_{n=1}^\infty$ is a bounded sequence of functionals on $E\odot_{\rm min} F$. We can thus extend $T$ to the whole of $E\otimes_{\rm min} F$. Moreover, for all $f\in E$ and $x\in F$ we have
$$| \langle e_n^*, f \rangle \cdot  \langle x, x_n^*\rangle | \leqslant \|f\| \cdot |\langle x, x_n^*\rangle|  $$ 
so $T$ takes values in $c_0$ as $(x^*_n)_{n=1}^\infty$ is a weak*-null sequence. 
Since the injective tensor product `respects subspaces' (see \cite[p.~49]{ryan}), $E_0 \eten B_2$ can be identified with a subspace of $B_1\eten B_2$ and the latter is a subspace of $E\otimes_{\rm min} F$. \medskip

As $E_0$ and $c_0$ are isometrically isomorphic, so are $E_0\eten B_2$ and $c_0\eten B_2 \cong c_0(B_2)$ (\emph{cf}.~\cite[Example 3.3]{ryan}).
Let $n\in \mathbb{N}$ and let $(a_k)_{k=1}^\infty$ be a scalar sequence with only finitely many non-zero entries. We have
\[\big\|\sum_{k=1}^n a_k e_k \otimes x_k\big\| = \big\|\sum_{k=1}^n e_k \otimes (a_k x_k)\big\| \leqslant \sup_{1\leqslant k \leqslant n} \|a_k x_k\|  \leqslant 2\max\{ |a_k|\colon 1\leqslant k \leqslant n\}.\]
By \cite[Proposition 4.3.9]{meg}, $\sum_{n=1}^\infty e_n \otimes x_n$ is a weakly unconditionally convergent series in $E\otimes_{{\rm min}} F$. On the other hand, for all $k,n\in \mathbb{N}$ we have $(T(e_n \otimes x_n))(k) =\delta_{k,n}$ so $$\sum_{n=1}^\infty T(e_n \otimes x_n)$$
fails to converge in $c_0$. Consequently, $E\otimes_{{\rm min}} F$ is not a Grothendieck space as we proved that the $c_0$-valued operator $T$ is not unconditionally converging. Indeed, if $E\otimes_{{\rm min}} F$ were Grothendieck, $T$ would be weakly compact (Proposition~\ref{grchar}\eqref{g3}), hence also unconditionally converging (Proposition~\ref{grchar}\eqref{g4}).\end{proof}

\subsection*{Acknowledgments} We are indebted to Y.~Choi and N.~Ozawa for spotting an error in the previous version of the proof of Theorem~\ref{main}.

\bibliographystyle{amsplain}

\end{document}